\documentclass[11pt,a4paper]{article}

\usepackage[utf8]{inputenc}
\usepackage[T1]{fontenc}
\usepackage{amsmath}
\usepackage{amssymb} 
\usepackage{amsthm}
\usepackage{enumerate}
\usepackage{graphicx}
\usepackage[margin=1in]{geometry}

\usepackage{authblk}

\usepackage{hyperref}

\newtheorem{theorem}{Theorem}

\newtheorem{lemma}[theorem]{Lemma}

\newtheorem{remark}[theorem]{Remark}

\author[1]{Ulrich Abel}
\author[2,3]{Naim L. Braha} 

\affil[1]{Technische Hochschule Mittelhessen, Department Mathematik, Naturwissenschaften und Datenverarbeitung, Wilhelm-Leuschner-Straße 13, 61169 Friedberg, Germany. \textit{E-mail: Ulrich.Abel@mnd.thm.de}}
\affil[2]{Department of Mathematics and Computer Sciences, University of Prishtina, Avenue George Bush, No-5, 10000 Prishtina, Kosova}
\affil[3]{Ilirias Research Institute (\url{http://ilirias.com}), Janina No-2, 70000 Ferizaj, Kosova. \textit{E-mail: nbraha@yahoo.com}}

\begin{document}

\title{\textbf{Complete asymptotic expansion for a Durrmeyer variant of
operators based on Hermite polynomials}}
\date{}
\maketitle

\begin{abstract}
In this paper, we study a Durrmeyer variant of the positive linear operators
based on two-variable Hermite polynomials recently introduced by G. Krech
(2016). Our main objective is to establish a complete asymptotic expansion
for these operators as $n \to \infty$ for locally integrable functions of
polynomial growth. The coefficients of the expansion are explicitly
expressed in terms of the derivatives of the function. As a corollary, a
Voronovskaja-type formula is obtained.
\end{abstract}


\vspace{0.5cm}


\section{Introduction}

\label{Intro}

In 2016, Gra\.{z}yna Krech \cite{Krech2016} introduced an interesting class
of positive linear operators $G_{n,\alpha }$, $n\in \mathbb{N}$, $\alpha
\geq 0$, which are defined for $x\geq 0$ and sufficiently regular functions $%
f$ by 
\begin{equation}
\left( G_{n,\alpha }f\right) \left( x\right) =e^{-\left( nx+\alpha
x^{2}\right) }\sum_{\nu =0}^{\infty }\frac{x^{\nu }}{\nu !}H_{\nu }\left(
n,\alpha \right) f\left( \frac{\nu }{n}\right) ,  \label{eq:Krech-op}
\end{equation}%
where $H_{\nu }(n,\alpha )$ denotes the two-variable Hermite polynomial
defined by 
\begin{equation*}
H_{\nu }\left( n,\alpha \right) =\nu !\sum_{j=0}^{\left\lfloor \nu
/2\right\rfloor }\frac{n^{\nu -2j}\alpha ^{j}}{\left( \nu -2j\right) !j!}.
\end{equation*}%
The exponential generating function of these polynomials is given by 
\begin{equation*}
\sum_{\nu =0}^{\infty }\frac{x^{\nu }}{\nu !}H_{\nu }\left( n,\alpha \right)
=\exp \left( nx+\alpha x^{2}\right) .
\end{equation*}%
From this it immediately follows that, for $j=0,1,2,\ldots $, 
\begin{equation}
\sum_{\nu =0}^{\infty }\frac{x^{\nu }}{\nu !}H_{\nu +j}\left( n,\alpha
\right) =\left( \frac{d}{dx}\right) ^{j}\exp \left( nx+\alpha x^{2}\right) .
\label{GF}
\end{equation}%
As $\alpha \geq 0$, the operators (\ref{eq:Krech-op}) are linear and
positive. They generalize the classical Sz\'{a}sz--Mirakyan operators, which
are recovered in the special case $\alpha =0$. Approximation properties and
Voronovskaja-type theorems for these discrete operators and related forms
have been studied recently in \cite{AbelBraha2025, Gupta2026}.

In order to approximate Lebesgue integrable functions, it is natural to
consider integral modifications of discrete operators. The main goal of this
paper is to introduce and investigate a Durrmeyer-type modification of the
operators (\ref{eq:Krech-op}). We derive a complete asymptotic expansion for
this variant, which generalizes the classical Sz\'{a}sz--Mirakyan--Durrmeyer
operators. The corresponding result for the operators (\ref{eq:Krech-op})
can be found in \cite[Theorem 2.1]{AbelBraha2025}.

The paper is organized as follows: In Section 2, we formally define the
Durrmeyer variant $\bar{G}_{n,\alpha }$ and state our main result regarding
its complete asymptotic expansion. Section 3 is devoted to auxiliary lemmas,
central moments, and the proof of the main theorem.

\section{Durrmeyer variant}

For $n\in \mathbb{N}$, $\alpha \geq 0$, and $x\geq 0$, we define 
\begin{equation}
\left( \bar{G}_{n,\alpha }f\right) \left( x\right) =e^{-\left( nx+\alpha
x^{2}\right) }\sum_{\nu =0}^{\infty }\frac{x^{\nu }}{\nu !}H_{\nu }\left(
n,\alpha \right) \Phi _{n,\nu }\left( f\right) ,
\label{def-G-n-alpha-Durrmeyer}
\end{equation}%
where the point evaluation $f\left( \frac{\nu }{n}\right) $ of the operator (%
\ref{eq:Krech-op}) is replaced with the integral 
\begin{equation*}
\Phi _{n,\nu }\left( f\right) :=n\int_{0}^{\infty }e^{-nt}\frac{\left(
nt\right) ^{\nu }}{\nu !}f\left( t\right) dt.
\end{equation*}%
In the special case $\alpha =0$,\ one has $H_{\nu }\left( n,0\right) =n^{\nu
}$ and the operators $\left( \ref{def-G-n-alpha-Durrmeyer}\right) $ reduce
to the classical Sz\'{a}sz--Mirakyan--Durrmeyer operators 
\begin{equation*}
\left( G_{n,0}f\right) \left( x\right) =e^{-nx}\sum_{\nu =0}^{\infty }\frac{%
\left( nx\right) ^{\nu }}{\nu !}n\int_{0}^{\infty }e^{-nt}\frac{\left(
nt\right) ^{\nu }}{\nu !}f\left( t\right) dt.
\end{equation*}

For $s\in \mathbb{N}$ and $x\in \left( 0,\infty \right) $, let $K\left[ s;x%
\right] $ be the class of all locally integrable functions $f$ on $\mathbb{R}%
_{0}^{+}$, which satisfy the growth condition $f\left( t\right) =O\left(
t^{s}\right) $ as $t\rightarrow +\infty $, and which are $s$ times
differentiable at $x$. The following theorem presents as our main result the
complete asymptotic expansion for the operators $G_{n,\alpha }$.

\begin{theorem}
\label{theorem-expansion}Let $q\in \mathbb{N}$ and $x\in \left( 0,\infty
\right) $. For each function $f\in K\left[ 2q;x\right] $, the operators $%
\bar{G}_{n,\alpha }$ possess the asymptotic expansion 
\begin{equation}
\left( \bar{G}_{n,\alpha }f\right) \left( x\right) =\sum_{k=0}^{q}\frac{%
c_{k,\alpha }\left( f,x\right) }{n^{k}}+o\left( n^{-q}\right) \qquad \left(
n\rightarrow \infty \right)  \label{complete-asympt-expansion-in-theorem}
\end{equation}%
with the coefficients 
\begin{equation}
c_{k,\alpha }\left( f,x\right) =\frac{1}{k!}\sum_{i=0}^{k}\binom{k}{i}\left(
2\alpha \right) ^{k-i}\left( x^{2k-i}f^{\left( k\right) }\left( x\right)
\right) ^{\left( i\right) }  \label{coeff-c}
\end{equation}%
$\left( k=0,1,2,\ldots \right) $.
\end{theorem}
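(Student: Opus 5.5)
We follow the standard route for complete asymptotic expansions of positive linear operators. The first step is to compute the moments in closed form via generating functions. For fixed $x$ and a parameter $u$ we apply $\bar{G}_{n,\alpha}$ to $e_u(t):=e^{ut}$ with $u<n$. The inner integral is
\[
\Phi_{n,\nu}(e_u)=\frac{n^{\nu+1}}{(n-u)^{\nu+1}},
\]
so the generating function of the Hermite polynomials yields
\[
\bigl(\bar{G}_{n,\alpha}e_u\bigr)(x)=\frac{n}{n-u}\exp\Bigl(nx\Bigl(\frac{n}{n-u}-1\Bigr)+\alpha x^{2}\Bigl(\frac{n^{2}}{(n-u)^{2}}-1\Bigr)\Bigr).
\]
Multiplying by $e^{-ux}$ gives the exponential generating function of the central moments $\bar{G}_{n,\alpha}\bigl((t-x)^m\bigr)(x)$. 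Writing $z=u/n$, this equals
\[
\frac{1}{1-z}\exp\Bigl(nx\frac{z^{2}}{1-z}+\alpha x^{2}\frac{2z-z^{2}}{(1-z)^{2}}\Bigr).
\]
Extracting the coefficient of $u^m/m!$ shows that the $m$-th central moment is a polynomial in $1/n$ of order $O(n^{-\lceil m/2\rceil})$, since the term $nx z^2/(1-z)$ contributes at least $z^2$ per power of $n$. This also shows that $\bar G_{n,\alpha}$ is well defined on functions of polynomial growth, because all monomials are mapped to finite values.

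The second step is localization. We write Taylor's formula
\[
f(t)=\sum_{j=0}^{2q}\frac{f^{(j)}(x)}{j!}(t-x)^j+h(t)(t-x)^{2q},
\]
where $h(t)\to0$ as $t\to x$, $h$ is bounded near $x$, and $|h(t)(t-x)^{2q}|\le C(1+t^{2q})$ globally. For any $\delta>0$, split the remainder term into the parts with $|t-x|<\delta$ and $|t-x|\ge\delta$. The near part is bounded by $\sup_{|t-x|<\delta}|h|$ times the $2q$-th central moment, that is, by $\varepsilon\cdot O(n^{-q})$. The far part is bounded by $C\,\delta^{-2r}\,\bar G_{n,\alpha}\bigl((t-x)^{2r}(1+t^{2q})\bigr)(x)$ for large $r$, which is $O(n^{-r'})$ with $r'>q$ by the moment estimate of step one applied to a polynomial. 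Hence the remainder is $o(n^{-q})$.

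The main obstacle is the third step, the algebra that identifies the coefficients. We set
\[
\bar{G}_{n,\alpha}\bigl((t-x)^j\bigr)(x)=\sum_k a_{j,k}(x)\,n^{-k}.
\]
After truncating the higher-order terms, the $n^{-k}$ coefficient equals $\sum_{j\le 2k} a_{j,k}f^{(j)}(x)/j!$, and we must show this equals (\ref{coeff-c}). We plan to verify the identity at the level of generating functions rather than term by term. Both sides are linear in $f$, so it suffices to check them on $f=e_u$. For $f=e_u$, the right side of (\ref{coeff-c}) is
\[
\frac{u^k}{k!}\sum_i\binom{k}{i}(2\alpha)^{k-i}\,D^i\bigl(x^{2k-i}e^{ux}\bigr).
\]
We will check that $\sum_k n^{-k}c_{k,\alpha}(e_u,x)$ reproduces the expansion in $1/n$ of the exact formula above, viewed as a formal power series in $1/n$ for fixed $u$. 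We expect to do this by differentiating the closed form in $x$. Alternatively, we can rewrite $\frac{1}{k!}\sum_i\binom{k}{i}(2\alpha)^{k-i}D^i x^{2k-i}$ as an operator identity of Lagrange/Taylor type and match it with the expansion $\frac{1}{1-z}\exp(\cdots)$ through Cauchy's integral formula in $z$. The case $\alpha=0$, which reduces to the known Szász--Mirakyan--Durrmeyer expansion $\frac1{k!}(x^kf^{(k)})^{(k)}$, serves as a consistency check. Once the identity holds on exponentials, comparing coefficients of $u^j$ transfers it to the finite sum of derivatives, which completes the proof.
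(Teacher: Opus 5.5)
Your Steps 1 and 2 are correct, and they are neater than the paper's route. The closed form for $\bar G_{n,\alpha}e_u$ yields all central moments and the order $O(n^{-\lceil m/2\rceil})$ at once, which replaces the paper's Fa\`a di Bruno computation. Your localization is Sikkema's lemma proved directly. As written it needs $f$ locally bounded, because the bound $|h(t)(t-x)^{2q}|\le C(1+t^{2q})$ fails at singularities of a merely locally integrable $f$; the paper's appeal to Sikkema has the same restriction. The reduction in Step 3, testing on $e_u$ and then comparing powers of $u$, is also sound.

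\textbf{The genuine gap is Step 3.} You only announce that $\sum_k n^{-k}c_{k,\alpha}(e_u,x)$ reproduces the expansion of your closed form, and this cannot be carried out: for $\alpha>0$ and $k\ge2$ the identity is false. Your own generating function shows it. The coefficient of $z^2$ in $\frac{1}{1-z}\exp\bigl(nx\frac{z^2}{1-z}+\alpha x^2\frac{2z-z^2}{(1-z)^2}\bigr)$ is $1+nx+5\alpha x^2+2\alpha^2x^4$. Hence $\bar G_{n,\alpha}\psi_x^2(x)=\frac{2x}{n}+\frac{2+10\alpha x^2+4\alpha^2x^4}{n^2}$, and the coefficient of $f''(x)$ at order $n^{-2}$ is $1+5\alpha x^2+2\alpha^2x^4$. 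Formula (\ref{coeff-c}) gives $1+6\alpha x^2+2\alpha^2x^4$ instead.

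Concretely, take $f(t)=t^2\in K[4;x]$. The weights are the law of $X+2Y$ with independent $X\sim\mathrm{Poisson}(nx)$ and $Y\sim\mathrm{Poisson}(\alpha x^2)$, and $\Phi_{n,\nu}(t^2)=(\nu+1)(\nu+2)/n^2$. Therefore exactly $\bar G_{n,\alpha}f(x)=x^2+\frac{4x+4\alpha x^3}{n}+\frac{2+10\alpha x^2+4\alpha^2x^4}{n^2}$. The theorem with $q=2$ predicts $12\alpha x^2$ in place of $10\alpha x^2$, an error of exact order $n^{-2}$. Your sanity checks ($\alpha=0$, and the term $k=1$) cannot detect this, because both formulas agree there.

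The paper reaches (\ref{coeff-c}) only through a slip in the proof of Lemma~\ref{lemma-moments}. In its Fa\`a di Bruno formula the factor must be $(\alpha x^2)^{j-i}$, not $(2\alpha x^2)^{j-i}$. For $j=2$, $\frac{d^2}{dz^2}g(\phi)=g''(\phi)\phi'^2+2\alpha x^2g'(\phi)$, not $+4\alpha x^2 g'(\phi)$. So the moment lemma is already wrong at $r=2$.

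Your method does produce a correct theorem. Put $t=u/n$ and $h(y)=e^{uy+\alpha y^2}$. Your closed form then reads $\bar G_{n,\alpha}e_u(x)=e^{-\alpha x^2}\frac{1}{1-t}h\bigl(\frac{x}{1-t}\bigr)$. The Lagrange-type identity $\frac{1}{1-t}h\bigl(\frac{x}{1-t}\bigr)=\sum_{k\ge0}\frac{t^k}{k!}\bigl(x^kh(x)\bigr)^{(k)}$ holds: on $h(y)=y^m$ both sides equal $x^m(1-t)^{-m-1}$. It follows that the $n^{-k}$-coefficient is $\frac{1}{k!}e^{-\alpha x^2}\bigl(x^ke^{\alpha x^2}e_u^{(k)}(x)\bigr)^{(k)}$. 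Your coefficient comparison in $u$ then gives the expansion with $c_{k,\alpha}(f,x)=\frac{1}{k!}e^{-\alpha x^2}\bigl(x^ke^{\alpha x^2}f^{(k)}(x)\bigr)^{(k)}=\frac{1}{k!}\bigl(\frac{d}{dx}+2\alpha x\bigr)^k\bigl(x^kf^{(k)}(x)\bigr)$. This coincides with (\ref{coeff-c}) only when $k\le1$ or $\alpha=0$. What your argument can establish is this corrected statement, not the one as given.
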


\begin{remark}
Let $x\in \left( 0,\infty \right) $. For each function $f\in
\bigcap_{q=1}^{\infty }K\left[ 2q;x\right] $, the operators $\bar{G}%
_{n,\alpha }$ possess the complete asymptotic expansion 
\begin{equation*}
\left( \bar{G}_{n,\alpha }f\right) \left( x\right) \sim \sum_{k=0}^{\infty }%
\frac{1}{k!n^{k}}\sum_{i=0}^{k}\binom{k}{i}\left( 2\alpha \right)
^{k-i}\left( x^{2k-i}f^{\left( k\right) }\left( x\right) \right) ^{\left(
i\right) }\qquad \left( n\rightarrow \infty \right) .
\end{equation*}
\end{remark}

\begin{remark}
For the convenience of the reader, we list the explicit expressions for the
initial coefficients $c_{k,\alpha }\left( f,x\right) $ of the Durrmeyer
variant: 
\begin{align*}
c_{0,\alpha }\left( f,x\right) & =f(x) \\[0.5em]
c_{1,\alpha }\left( f,x\right) & =\left( 2\alpha x^{2}+1\right) f^{\prime
}(x)+xf^{\prime \prime }(x) \\[0.5em]
c_{2,\alpha }\left( f,x\right) & =\left( 2\alpha ^{2}x^{4}+6\alpha
x^{2}+1\right) f^{\prime \prime }(x)+\left( 2\alpha x^{3}+2x\right)
f^{(3)}(x)+\frac{1}{2}x^{2}f^{(4)}(x) \\[0.5em]
c_{3,\alpha }\left( f,x\right) & =\left( \frac{4}{3}\alpha
^{3}x^{6}+10\alpha ^{2}x^{4}+12\alpha x^{2}+1\right) f^{(3)}(x) \\
& \quad +\left( 2\alpha ^{2}x^{5}+8\alpha x^{3}+3x\right) f^{(4)}(x) \\
& \quad +\left( \alpha x^{4}+\frac{3}{2}x^{2}\right) f^{(5)}(x)+\frac{1}{6}%
x^{3}f^{(6)}(x)
\end{align*}
\end{remark}

In the particular case $q=1$, we obtain the following Voronovskaja-type
formula.

\begin{theorem}
Let $x\in \left( 0,\infty \right) $. For each function $f\in K\left[ 2;x%
\right] $, the operators $\bar{G}_{n,\alpha }$ possess the asymptotic
expansion 
\begin{equation*}
\lim_{n\rightarrow \infty }n\left( \left( \bar{G}_{n,\alpha }f\right) \left(
x\right) -f\left( x\right) \right) =\left( 2\alpha x^{2}+1\right) f^{\prime
}\left( x\right) +xf^{\prime \prime }\left( x\right) .
\end{equation*}
\end{theorem}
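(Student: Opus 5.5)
The plan is to obtain this as the special case $q=1$ of Theorem~\ref{theorem-expansion}, so the only real work is evaluating the first two coefficients in (\ref{coeff-c}). The hypothesis $f\in K\left[2;x\right]$ is exactly $f\in K\left[2q;x\right]$ with $q=1$. Theorem~\ref{theorem-expansion} therefore gives
\begin{equation*}
\left(\bar{G}_{n,\alpha}f\right)(x)=c_{0,\alpha}(f,x)+\frac{c_{1,\alpha}(f,x)}{n}+o\left(n^{-1}\right)\qquad(n\rightarrow\infty).
\end{equation*}

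Next we compute the two coefficients from (\ref{coeff-c}).

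For $k=0$ the sum has the single term $i=0$, with $\binom{0}{0}(2\alpha)^0 x^0 f(x)$, so $c_{0,\alpha}(f,x)=f(x)$.

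For $k=1$ there are two terms:
\begin{itemize}
\item $i=0$ contributes $2\alpha x^{2}f'(x)$;
\item $i=1$ contributes $\left(xf'(x)\right)'=f'(x)+xf''(x)$.
\end{itemize}
Adding them gives
\begin{equation*}
c_{1,\alpha}(f,x)=\left(2\alpha x^{2}+1\right)f'(x)+xf''(x),
\end{equation*}
which agrees with the list in the remark above.

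We should check that this derivative makes sense under the hypotheses. The expression $\left(xf'(x)\right)'$ requires $f'$ to be differentiable at the point $x$. This holds because $f$ is twice differentiable at $x$, so $f'$ exists in a neighbourhood of $x$ and is differentiable at $x$. Hence the product rule applies at $x$.

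Finally, we subtract $f(x)$ from the expansion, multiply by $n$, and let $n\rightarrow\infty$. The remainder $n\cdot o(n^{-1})$ tends to zero, which yields the stated limit.

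There is no genuine obstacle in this derivation. All the analytic difficulty sits inside Theorem~\ref{theorem-expansion}, which we take as given.

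If one wanted a self-contained argument instead, the route would run through the first central moments of $\bar{G}_{n,\alpha}$, obtained from the generating function (\ref{GF}). One would use $\bar{G}_{n,\alpha}\left((t-x)\right)(x)=(1+2\alpha x^{2})/n$ and $\bar{G}_{n,\alpha}\left((t-x)^{2}\right)(x)=2x/n+O(n^{-2})$. One would then need fourth-moment bounds of order $O(n^{-2})$, together with a localization estimate, to handle the Peano remainder of the Taylor expansion and the polynomial growth of $f$. That localization step would be the main technical point, but the corollary route above avoids it entirely.
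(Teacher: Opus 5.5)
Your proposal is correct and follows the paper exactly: there the Voronovskaja formula is obtained as the case $q=1$ of Theorem~\ref{theorem-expansion}, and your evaluation of (\ref{coeff-c}) giving $c_{0,\alpha}(f,x)=f(x)$ and $c_{1,\alpha}(f,x)=\left(2\alpha x^{2}+1\right)f'(x)+xf''(x)$ is right. Your side remarks on the first two central moments also agree with Lemma~\ref{lemma-central-moments} for $s=1,2$.
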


\section{Auxiliary results and proof of the main theorem}

Firstly, we study the moments of the operators $G_{n,\alpha }$. Throughout
the paper, let $e_{r}$ denote the monomials, given by $e_{r}\left( x\right)
=x^{r}$ $\left( r=0,1,2,\ldots \right) $. Furthermore, define $\psi
_{x}=e_{1}-xe_{0}$, for $x\in \mathbb{R}$.

\begin{lemma}
\label{lemma-moments}For $r=0,1,2,\ldots $, the moments of the operators $%
\left( \ref{eq:Krech-op}\right) $ have the explicit representation 
\begin{equation*}
\left( \bar{G}_{n,\alpha }e_{r}\right) \left( x\right) =\sum_{k=0}^{r}\frac{1%
}{n^{k}}c_{k,\alpha }\left( e_{r},x\right) ,
\end{equation*}%
where 
\begin{equation*}
c_{k,\alpha }\left( e_{r},x\right) =\frac{1}{k!}\sum_{i=0}^{k}\binom{k}{i}%
\left( x^{2k-i}e_{r}^{\left( k\right) }\left( x\right) \right) ^{\left(
i\right) }\left( 2\alpha \right) ^{k-i}.
\end{equation*}
\end{lemma}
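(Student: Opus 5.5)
Compute the moments in two stages and then match the result with the coefficient formula. First, apply the Durrmeyer functional to monomials via the Gamma integral:
\[
\Phi_{n,\nu}(e_r)=n\int_0^\infty e^{-nt}\frac{(nt)^\nu}{\nu!}t^r\,dt=\frac{(\nu+r)!}{\nu!\,n^r}.
\]
Since $x^\nu (\nu+r)!/\nu! = (d/dx)^r x^{\nu+r}$, this gives
\[
(\bar G_{n,\alpha}e_r)(x)=\frac{e^{-(nx+\alpha x^2)}}{n^r}\sum_{\nu=0}^\infty \frac{H_\nu(n,\alpha)}{\nu!}\Big(\frac{d}{dx}\Big)^r x^{\nu+r}
=\frac{e^{-(nx+\alpha x^2)}}{n^r}\Big(\frac{d}{dx}\Big)^r\Big[x^r e^{nx+\alpha x^2}\Big].
\]
Termwise differentiation is justified because the generating function is entire, with absolutely convergent power series. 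This closed form is the central identity.

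Second, expand the right-hand side. By Leibniz,
\[
e^{-g}\Big(\frac{d}{dx}\Big)^r\big[x^r e^{g}\big]=\sum_{j}\binom{r}{j}\big(x^r\big)^{(r-j)}\, e^{-g}\big(e^{g}\big)^{(j)},\qquad g=nx+\alpha x^2.
\]
The factor $e^{-g}(e^g)^{(j)}$ is a polynomial in $n+2\alpha x$ and $2\alpha$: it is the Hermite-type polynomial $\sum_{m}\frac{j!}{(j-2m)!m!}(n+2\alpha x)^{j-2m}\alpha^m$. Expanding $(n+2\alpha x)^{j-2m}$ binomially and dividing by $n^r$ expresses $(\bar G_{n,\alpha}e_r)(x)$ as a finite sum $\sum_{k=0}^r a_k(x)n^{-k}$ with explicit polynomial coefficients $a_k$. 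This already proves the shape of the claim, with no powers beyond $n^{-r}$.

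The main obstacle is identifying $a_k$ with $c_{k,\alpha}(e_r,x)=\frac1{k!}\sum_i\binom ki(2\alpha)^{k-i}(x^{2k-i}e_r^{(k)})^{(i)}$. A direct index comparison is messy, so I would argue through generating functions instead. Multiply by $z^r/r!$ and sum over $r$, replacing $e_r$ by $e^{zt}$; the linearity and convergence needed for this are easy for $|z|<n$. The operator then has the closed form
\[
\bar G_{n,\alpha}(e^{z\cdot})(x)=\frac{n}{n-z}\exp\Big(\frac{nzx}{n-z}+\frac{\alpha x^2 z(2n-z)}{(n-z)^2}\Big),
\]
obtained from $\Phi_{n,\nu}(e^{z\cdot})=n^{\nu+1}/(n-z)^{\nu+1}$ together with the generating function at the argument $nx/(n-z)$.

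On the other side, $\sum_k n^{-k}c_{k,\alpha}(e^{z\cdot},x)$ is $\sum_k \frac{1}{k!n^k}(D+2\alpha)^k$ applied formally to $x^{2k}z^k e^{zx}$ with outer differentiation $D$. This can be resummed with a Lagrange-inversion / Taylor-shift type identity, namely $\sum_k \frac{w^k}{k!}D^k[\phi(x)^k F(x)]=F(y)/(1-w\phi'(y))$ with $y=x+w\phi(y)$. Take $\phi(x)=x^2$, and account for the factor $e^{2\alpha\cdot}$ by writing $(D+2\alpha)^k h=e^{-2\alpha x}D^k(e^{2\alpha x}h)$. Then verify that both closed forms agree as power series in $1/n$. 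Comparing coefficients of $z^r$ yields the lemma. If the resummation proves awkward, I would instead check the identity by induction on $r$, using the recurrence $n\,\bar G e_{r+1}=(D+n+2\alpha x)\big(\bar G e_r\big)+r\,\bar G e_r$-type relations that follow from $(d/dx)^{r+1}[x^{r+1}e^g]$.
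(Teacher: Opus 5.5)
Your opening matches the paper's own starting point: $(\bar{G}_{n,\alpha}e_r)(x)=n^{-r}e^{-g}(d/dx)^r[x^re^{g}]$ with $g=nx+\alpha x^2$ (the paper writes it with $d/dz$ at $z=1$). Your expansion $e^{-g}(e^{g})^{(j)}=\sum_m\frac{j!}{(j-2m)!\,m!}(n+2\alpha x)^{j-2m}\alpha^m$ is also right. The gap is the final step, where you plan to ``verify that both closed forms agree''. They do not agree, and no completion of the plan (including your fallback induction) can succeed, because the identity in the lemma is false for $r\ge 2$ and $\alpha>0$. Finishing your own expansion for $r=2$ gives
\[
(\bar{G}_{n,\alpha}e_2)(x)=x^2+\frac{4x+4\alpha x^3}{n}+\frac{2+10\alpha x^2+4\alpha^2x^4}{n^2}.
\]
A quick check: the weights are the law of $\nu=X+2Y$ with independent $X\sim\mathrm{Poisson}(nx)$ and $Y\sim\mathrm{Poisson}(\alpha x^2)$, and $\Phi_{n,\nu}(e_2)=(\nu+1)(\nu+2)/n^2$. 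The stated formula instead gives $c_{2,\alpha}(e_2,x)=2+12\alpha x^2+4\alpha^2x^4$. The paper's proof reaches $12$ because its Fa\`a di Bruno formula carries the factor $(2\alpha x^2)^{j-i}$. The correct factor is $(h''/2!)^{j-i}=(\alpha x^2)^{j-i}$ for $h(z)=nxz+\alpha x^2z^2$, which is exactly the $\alpha^m$ rather than $(2\alpha)^m$ that your Hermite expansion gets right. On the generating-function side, with $u=z/n$, the stated coefficients resum to $\frac{1}{1-u}\exp\bigl(\frac{xz}{1-u}+\frac{2\alpha x^2u}{(1-u)^2}\bigr)$. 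Your (correct) closed form for $\bar{G}_{n,\alpha}(e^{z\cdot})(x)$ is $\frac{1}{1-u}\exp\bigl(\frac{xz}{1-u}+\frac{\alpha x^2u(2-u)}{(1-u)^2}\bigr)$. Their ratio $\exp\bigl(-\alpha x^2u^2/(1-u)^2\bigr)$ shows the discrepancy from order $n^{-2}$ on.

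Your intermediate reformulation is also inaccurate. With $D=d/dx$, the sum $\sum_k\frac{1}{k!n^k}(D+2\alpha)^k[x^{2k}z^ke^{zx}]$ is not $\sum_k n^{-k}c_{k,\alpha}(e^{z\cdot},x)$, because $c_{k,\alpha}$ involves $D^i[x^{2k-i}f^{(k)}]$, with the power of $x$ dropping with $i$. For $k=1$, $f=e_1$ your version gives $2x+2\alpha x^2$ instead of $1+2\alpha x^2$. The relevant Lagrange kernel is also $\phi(x)=x$, not $x^2$. Used correctly, your method proves a corrected lemma. Apply $\sum_k\frac{u^k}{k!}D^k[x^kF(x)]=F\bigl(x/(1-u)\bigr)/(1-u)$ with $F(y)=e^{\alpha y^2+zy}$ and compare coefficients of $z^r$ to get
\[
(\bar{G}_{n,\alpha}e_r)(x)=\sum_{k=0}^{r}\frac{e^{-\alpha x^2}}{k!\,n^k}\bigl(x^ke^{\alpha x^2}e_r^{(k)}(x)\bigr)^{(k)}=\sum_{k=0}^{r}\frac{1}{k!\,n^k}(D+2\alpha x)^k\bigl[x^ke_r^{(k)}\bigr](x).
\]
The paper's $c_{k,\alpha}$ is what one obtains by expanding $(D+2\alpha x)^k$ binomially with all derivatives moved to the left, as if $D$ commuted with multiplication by $x$. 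The two agree for $k\le 1$, so the Voronovskaja formula survives. They do not agree for $k\ge 2$: for example, the stated $c_{2,\alpha}(f,x)$ exceeds the true one by $\alpha x^2f''(x)$.
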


\begin{remark}
Note that the exponent $k+r-2i$ of the variable $x$ in the expression $%
\left( x^{2k-i}e_{r}^{\left( k\right) }\left( x\right) \right) ^{\left(
i\right) }$ is nonnegative, since non-zero terms can occur only if $0\leq
i\leq \min \left\{ k,r\right\} $. The moments $G_{n,\alpha }e_{r}$ are
polynomials in $x$ and $\alpha $ of degree $2r$ and $r$, respectively. The
leading term of $\left( \bar{G}_{n,\alpha }e_{r}\right) \left( x\right) $ is
given by $\left( 2/n\right) ^{r}x^{2r}\alpha ^{r}$.
\end{remark}

\begin{proof}
Using 
\begin{equation*}
\Phi _{n,\nu }\left( e_{r}\right) =n^{-r}\int_{0}^{\infty }e^{-t}\frac{%
t^{\nu +r}}{\nu !}dt=\frac{\left( \nu +r\right) !}{n^{r}\nu !}=\frac{1}{n^{r}%
}\left. \left( \frac{d}{dz}\right) ^{r}z^{\nu +r}\right\vert _{z=1}
\end{equation*}%
we obtain\ 
\begin{eqnarray*}
\left( \bar{G}_{n,\alpha }e_{r}\right) \left( x\right) &=&n^{-r}e^{-\left(
nx+\alpha x^{2}\right) }\left. \left( \frac{d}{dz}\right) ^{r}\left(
z^{r}\sum_{\nu =0}^{\infty }\frac{\left( xz\right) ^{\nu }}{\nu !}H_{\nu
}\left( n,\alpha \right) \right) \right\vert _{z=1} \\
&=&n^{-r}e^{-\left( nx+\alpha x^{2}\right) }\left. \left( \frac{d}{dz}%
\right) ^{r}\left[ z^{r}\exp \left( nxz+\alpha \left( xz\right) ^{2}\right) %
\right] \right\vert _{z=1},
\end{eqnarray*}%
where the last equality follows from $\left( \ref{GF}\right) $. Application
of the Leibniz rule for differentiation leads to 
\begin{equation*}
\left( \bar{G}_{n,\alpha }e_{r}\right) \left( x\right) =\frac{r!}{n^{r}}%
e^{-\left( nx+\alpha x^{2}\right) }\sum_{j=0}^{r}\binom{r}{j}\frac{1}{j!}%
\left. \left( \frac{d}{dz}\right) ^{j}\exp \left( nxz+\alpha \left(
xz\right) ^{2}\right) \right\vert _{z=1}.
\end{equation*}%
It is an easy consequence of Faa di Bruno's rule for the differentiation of
composite functions that 
\begin{equation*}
\left( \frac{d}{dz}\right) ^{j}g\left( nxz+\alpha \left( xz\right)
^{2}\right) =\sum_{j/2\leq i\leq j}g^{\left( i\right) }\left( nxz+\alpha
\left( xz\right) ^{2}\right) \frac{j!}{\left( 2i-j\right) !\left( j-i\right)
!}\left( nx+2\alpha x^{2}z\right) ^{2i-j}\left( 2\alpha x^{2}\right) ^{j-i}.
\end{equation*}%
Hence, 
\begin{eqnarray*}
\left( \bar{G}_{n,\alpha }e_{r}\right) \left( x\right) &=&\frac{r!}{n^{r}}%
\sum_{j=0}^{r}\binom{r}{j}\sum_{j/2\leq i\leq j}\frac{1}{\left( 2i-j\right)
!\left( j-i\right) !}\left( nx+2\alpha x^{2}\right) ^{2i-j}\left( 2\alpha
x^{2}\right) ^{j-i} \\
&=&\frac{r!}{n^{r}}\sum_{i=0}^{r}\sum_{j=i}^{2i}\binom{r}{j}\frac{1}{\left(
2i-j\right) !\left( j-i\right) !}\left( nx+2\alpha x^{2}\right)
^{2i-j}\left( 2\alpha x^{2}\right) ^{j-i}
\end{eqnarray*}%
The index transform $j\rightarrow 2i-j$ yields 
\begin{eqnarray*}
\left( \bar{G}_{n,\alpha }e_{r}\right) \left( x\right) &=&\frac{r!}{n^{r}}%
\sum_{i=0}^{r}\sum_{j=0}^{i}\binom{r}{2i-j}\frac{1}{j!\left( i-j\right) !}%
\left( nx+2\alpha x^{2}\right) ^{j}\left( 2\alpha x^{2}\right) ^{i-j} \\
&=&\frac{r!}{n^{r}}\sum_{j=0}^{r}\left( nx+2\alpha x^{2}\right)
^{j}\sum_{i=j}^{r}\binom{r}{2i-j}\frac{1}{j!\left( i-j\right) !}\left(
2\alpha x^{2}\right) ^{i-j}.
\end{eqnarray*}%
Writing 
\begin{eqnarray*}
\left( \bar{G}_{n,\alpha }e_{r}\right) \left( x\right) &=&\frac{r!}{n^{r}}%
\sum_{j=0}^{r}\left( nx+2\alpha x^{2}\right) ^{r-j}\sum_{i=r-j}^{r}\binom{r}{%
2i-r+j}\frac{1}{\left( r-j\right) !\left( i-r+j\right) !}\left( 2\alpha
x^{2}\right) ^{i-r+j} \\
&=&\frac{r!}{n^{r}}\sum_{j=0}^{r}\left( nx+2\alpha x^{2}\right)
^{r-j}\sum_{i=0}^{j}\binom{r}{r+j-2i}\frac{1}{\left( r-j\right) !\left(
j-i\right) !}\left( 2\alpha x^{2}\right) ^{j-i},
\end{eqnarray*}%
application of the binomial formula $\left( nx+2\alpha x^{2}\right)
^{r-j}=\sum_{\mu =0}^{r-j}\binom{r-j}{\mu }\left( nx\right) ^{r-j-\mu
}\left( 2\alpha x^{2}\right) ^{\mu }$ and collecting all terms with $j+\mu
=k $ leads to 
\begin{eqnarray*}
\left( \bar{G}_{n,\alpha }e_{r}\right) \left( x\right)
&=&r!\sum_{k=0}^{r}n^{-k}\sum_{j=0}^{k}\binom{r-j}{k-j}x^{r-k}\sum_{i=0}^{j}%
\binom{r}{r+j-2i}\frac{1}{\left( r-j\right) !\left( j-i\right) !}\left(
2\alpha x^{2}\right) ^{k-i} \\
&=&r!\sum_{k=0}^{r}n^{-k}\sum_{j=0}^{k}\binom{r-j}{k-j}\sum_{i=0}^{j}\binom{r%
}{r+j-2i}\frac{1}{\left( r-j\right) !\left( j-i\right) !}\left( 2\alpha
\right) ^{k-i}x^{r+k-2i} \\
&=&\sum_{k=0}^{r}\frac{r!}{n^{k}\left( r-k\right) !}\sum_{i=0}^{k}\left(
2\alpha \right) ^{k-i}x^{r+k-2i}\sum_{j=i}^{k}\binom{r}{r+j-2i}\frac{1}{%
\left( k-j\right) !\left( j-i\right) !} \\
&=&\sum_{k=0}^{r}\frac{r!}{n^{k}\left( r-k\right) !}\sum_{i=0}^{k}\left(
2\alpha \right) ^{k-i}x^{r+k-2i}\frac{1}{\left( k-i\right) !}\sum_{j=0}^{i}%
\binom{k-i}{j}\binom{r}{i-j}.
\end{eqnarray*}%
By Vandermonde convolution, 
\begin{equation*}
\sum_{j=0}^{i}\binom{k-i}{j}\binom{r}{i-j}=\binom{r+k-i}{i}.
\end{equation*}%
Hence, 
\begin{eqnarray*}
\left( \bar{G}_{n,\alpha }e_{r}\right) \left( x\right) &=&\sum_{k=0}^{r}%
\frac{r!}{n^{k}\left( r-k\right) !}\sum_{i=0}^{k}\left( 2\alpha \right)
^{k-i}x^{r+k-2i}\frac{1}{\left( k-i\right) !}\binom{r+k-i}{i} \\
\frac{1}{k!}\binom{k}{i}\left( x^{2k-i}e_{r}^{\left( k\right) }\left(
x\right) \right) ^{\left( i\right) } &=&\frac{r!}{\left( r-k\right) !}\frac{1%
}{\left( k-i\right) !}\binom{r+k-i}{i}x^{r+k-2i} \\
\left( \bar{G}_{n,\alpha }e_{r}\right) \left( x\right) &=&\sum_{k=0}^{r}%
\frac{1}{k!n^{k}}\sum_{i=0}^{k}\binom{k}{i}\left( x^{2k-i}e_{r}^{\left(
k\right) }\left( x\right) \right) ^{\left( i\right) }\left( 2\alpha \right)
^{k-i} \\
&=&\sum_{k=0}^{r}\frac{1}{n^{k}}c_{k,\alpha }\left( e_{r},x\right) ,
\end{eqnarray*}%
where 
\begin{equation*}
c_{k,\alpha }\left( e_{r},x\right) =\frac{1}{k!}\sum_{i=0}^{k}\binom{k}{i}%
\left( x^{2k-i}e_{r}^{\left( k\right) }\left( x\right) \right) ^{\left(
i\right) }\left( 2\alpha \right) ^{k-i}.
\end{equation*}%
Note that $e_{r}^{\left( k\right) }\left( x\right) =0$ and, therefore, $%
c_{k,\alpha }\left( e_{r},x\right) =0$, for all integers $k>r$.
\end{proof}

\begin{lemma}
\label{lemma-central-moments}For $s=0,1,2,\ldots $, the central moments of
the operators $\left( \ref{def-G-n-alpha-Durrmeyer}\right) $ have the explicit
representation 
\begin{equation*}
\left( \bar{G}_{n,\alpha }\psi _{x}^{s}\right) \left( x\right)
=\sum_{k=\left\lfloor \left( s+1\right) /2\right\rfloor }^{s}\frac{1}{k!n^{k}%
}\binom{k}{s-k}\sum_{i=0}^{2k-s}\binom{2k-s}{i}\frac{s!\left( k+i\right) !}{%
\left( s-k+2i\right) !}x^{s-k+2i}\left( 2\alpha \right) ^{i}.
\end{equation*}
\end{lemma}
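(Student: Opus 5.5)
The plan is to reduce everything to Lemma~\ref{lemma-moments} by linearity, viewing the coefficients $c_{k,\alpha}$ as differential operators. For fixed $n$ and $\alpha$, define for polynomials $f$ (in the variable $t$) the operator
\begin{equation*}
\left( L_{k}f\right) \left( t\right) =\frac{1}{k!}\sum_{i=0}^{k}\binom{k}{i}\left( 2\alpha \right) ^{k-i}\left( t^{2k-i}f^{\left( k\right) }\left( t\right) \right) ^{\left( i\right) }.
\end{equation*}
Each $L_k$ is linear in $f$. Lemma~\ref{lemma-moments} states that $\left( \bar{G}_{n,\alpha }e_{r}\right) \left( x\right) =\sum_{k\geq 0}n^{-k}\left( L_{k}e_{r}\right) \left( x\right)$, where the terms with $k>r$ vanish. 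Fix $x$ and expand $\psi _{x}^{s}=\sum_{r=0}^{s}\binom{s}{r}\left( -x\right) ^{s-r}e_{r}$; the coefficients here are constants in $t$. By linearity of $\bar{G}_{n,\alpha }$ and of each $L_{k}$ we then get
\begin{equation*}
\left( \bar{G}_{n,\alpha }\psi _{x}^{s}\right) \left( x\right) =\sum_{k=0}^{s}\frac{1}{n^{k}}\left. \left( L_{k}\left( t-x\right) ^{s}\right) \right\vert _{t=x}.
\end{equation*}
It therefore suffices to evaluate $L_{k}$ applied to $(t-x)^{s}$ at the point $t=x$.

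Next I compute this evaluation directly. First, $\frac{d^{k}}{dt^{k}}(t-x)^{s}=\frac{s!}{(s-k)!}(t-x)^{s-k}$, which is zero if $k>s$. Then I apply the Leibniz rule to $\left( t^{2k-i}(t-x)^{s-k}\right) ^{(i)}$ and set $t=x$. The only surviving term is the one where $(t-x)^{s-k}$ is differentiated exactly $s-k$ times. This requires $i\geq s-k$, and the term equals
\begin{equation*}
\binom{i}{s-k}(s-k)!\,\frac{(2k-i)!}{(k+s-2i)!}\,x^{k+s-2i}.
\end{equation*}
Hence the summand is nonzero only when $s-k\leq i\leq k$. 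This forces $k\geq \lceil s/2\rceil =\lfloor (s+1)/2\rfloor $ and $k\leq s$, which gives exactly the summation range in the statement.

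Finally I reindex by $i\mapsto k-i$, so the new index runs from $0$ to $2k-s$. The power becomes $x^{s-k+2i}$ with factor $(2\alpha)^{i}$, and $(2k-i)!$ becomes $(k+i)!$. The coefficient is then $\frac{s!}{k!}\binom{k}{i}\binom{k-i}{s-k}\frac{(k+i)!}{(s-k+2i)!}$. The remaining step is the elementary identity
\begin{equation*}
\binom{k}{i}\binom{k-i}{s-k}=\binom{k}{s-k}\binom{2k-s}{i},
\end{equation*}
which holds because both sides equal $k!/\left( i!\,(s-k)!\,(2k-s-i)!\right)$. Substituting it yields the claimed formula.

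The main point needing care is the first step. One must justify that the $x$-dependent function $\psi_x^s$ may be fed into the moment formula: we fix $x$, treat the coefficients of the expansion as constants in $t$, and only afterwards evaluate at $t=x$. The rest is bookkeeping of the surviving Leibniz term and the binomial identity.
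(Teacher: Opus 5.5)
Your proposal is correct and follows essentially the same route as the paper: you expand $\psi_x^s$ binomially, use Lemma~\ref{lemma-moments} and linearity to reduce to evaluating $\bigl(t^{2k-i}(d/dt)^k(t-x)^s\bigr)^{(i)}$ at $t=x$, keep the single surviving Leibniz term, and finish with a binomial identity. The only cosmetic difference is the order of the last two steps: you reindex $i\mapsto k-i$ before applying $\binom{k}{i}\binom{k-i}{s-k}=\binom{k}{s-k}\binom{2k-s}{i}$, whereas the paper first applies the equivalent identity $\binom{k}{i}\binom{i}{s-k}=\binom{k}{s-k}\binom{2k-s}{k-i}$ and then reindexes.
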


\begin{proof}
Our starting-point is the obvious relation 
\begin{equation*}
\left( \bar{G}_{n,\alpha }\psi _{x}^{s}\right) \left( x\right)
=\sum_{r=0}^{s}\binom{s}{r}\left( -x\right) ^{s-r}\left( \bar{G}_{n,\alpha
}e_{r}\right) \left( x\right)
\end{equation*}%
for the central moments. By Lemma~\ref{lemma-moments}, we have\ 
\begin{eqnarray*}
\left( \bar{G}_{n,\alpha }\psi _{x}^{s}\right) \left( x\right)
&=&\sum_{k=0}^{s}\frac{1}{k!n^{k}}\sum_{r=0}^{s}\binom{s}{r}\left( -x\right)
^{s-r}\sum_{i=0}^{k}\binom{k}{i}\left( 2\alpha \right) ^{k-i}\left. \left( 
\frac{d}{dt}\right) ^{i}\left( t^{2k-i}\left( \frac{d}{dt}\right)
^{k}t^{r}\right) \right\vert _{t=x} \\
&=&\sum_{k=0}^{s}\frac{1}{k!n^{k}}\sum_{i=0}^{k}\binom{k}{i}\left( 2\alpha
\right) ^{k-i}\left. \left( \frac{d}{dt}\right) ^{i}\left( t^{2k-i}\left( 
\frac{d}{dt}\right) ^{k}\left( t-x\right) ^{s}\right) \right\vert _{t=x}
\end{eqnarray*}%
Now, 
\begin{eqnarray*}
\left. \left( \frac{d}{dt}\right) ^{i}\left( t^{2k-i}\left( \frac{d}{dt}%
\right) ^{k}\left( t-x\right) ^{s}\right) \right\vert _{t=x} &=&\frac{s!}{%
\left( s-k\right) !}\left. \left( \frac{d}{dt}\right) ^{i}\left(
t^{2k-i}\left( t-x\right) ^{s-k}\right) \right\vert _{t=x} \\
&=&\frac{s!}{\left( s-k\right) !}\binom{i}{s-k}\left[ \left( \frac{d}{dx}%
\right) ^{i-s+k}x^{2k-i}\right] \left( s-k\right) ! \\
&=&s!\binom{i}{s-k}\frac{\left( 2k-i\right) !}{\left( k-2i+s\right) !}%
x^{k-2i+s},
\end{eqnarray*}%
such that 
\begin{equation*}
\left( \bar{G}_{n,\alpha }\psi _{x}^{s}\right) \left( x\right)
=\sum_{k=0}^{s}\frac{1}{k!n^{k}}\sum_{i=s-k}^{k}\binom{k}{i}\binom{i}{s-k}%
\frac{s!\left( 2k-i\right) !}{\left( k-2i+s\right) !}x^{k-2i+s}\left(
2\alpha \right) ^{k-i}
\end{equation*}%
The binomial identity 
\begin{equation*}
\binom{k}{i}\binom{i}{s-k}=\binom{k}{s-k}\binom{2k-s}{k-i}
\end{equation*}%
yields 
\begin{eqnarray*}
\left( \bar{G}_{n,\alpha }\psi _{x}^{s}\right) \left( x\right)
&=&\sum_{k=0}^{s}\frac{1}{k!n^{k}}\binom{k}{s-k}\sum_{i=s-k}^{k}\binom{2k-s}{%
k-i}\frac{s!\left( 2k-i\right) !}{\left( k-2i+s\right) !}x^{k-2i+s}\left(
2\alpha \right) ^{k-i} \\
&=&\sum_{k=0}^{s}\frac{1}{k!n^{k}}\binom{k}{s-k}\sum_{i=0}^{2k-s}\binom{2k-s%
}{i}\frac{s!\left( k+i\right) !}{\left( s-k+2i\right) !}x^{s-k+2i}\left(
2\alpha \right) ^{i}.
\end{eqnarray*}%
Since $\binom{k}{s-k}=0$ if $s>2k$, we have $\left( \bar{G}_{n,\alpha }\psi
_{x}^{s}\right) \left( x\right) =O\left( n^{-\left\lfloor \left( s+1\right)
/2\right\rfloor }\right) $ as $n\rightarrow \infty $. This completes the
proof of Lemma~\ref{lemma-central-moments}.
\end{proof}

In order to derive Theorem~\ref{theorem-expansion}, a general approximation
theorem due to Sikkema \cite[Theorem~3]{Sikkema1970a} will be applied. To
this end, we need some notation. Let $I$ be a real interval and $x\in I$.
For $s\in \mathbb{N}$, let $H^{\left( s\right) }\left( x\right) $ denote the
class of all locally bounded real functions $f:I\rightarrow \mathbb{R}$,
which are $s$ times differentiable at $x$. In the case that $I$ is an
infinite interval, $f$ has to satisfy the additional condition $f\left(
t\right) =O\left( \left\vert t\right\vert ^{s}\right) $ as $t\rightarrow
+\infty $ or $t\rightarrow -\infty $, respectively, or both if $I=\mathbb{R}$%
. An inspection of the proof of Sikkema's result reveals that it can be
stated in the following form which is more appropriate for our purposes.

\begin{lemma}
\label{Lemma-Sikkema}Let $q\in \mathbb{N}$ and let $\left( L_{n}\right)
_{n\in \mathbb{N}}$ be a sequence of positive linear operators, $%
L_{n}:H^{\left( 2q\right) }\left( x\right) \rightarrow C\left[ a,b\right] $, 
$x\in \left[ a,b\right] $. Suppose that the operators $L_{n}$ apply to $\psi
_{x}^{2q+1}$ and to $\psi _{x}^{2q+2}$. Then the condition 
\begin{equation*}
\left( L_{n}\psi _{x}^{s}\right) \left( x\right) =O\left( n^{-\left \lfloor
\left( s+1\right) /2\right \rfloor }\right) \text{ \qquad }\left(
n\rightarrow \infty \right) ,\text{ \qquad for }s=0,1,\ldots ,2q+2,
\end{equation*}%
implies, for each function $f\in H^{\left( 2q\right) }\left( x\right) $, the
asymptotic relation 
\begin{equation*}
\left( L_{n}f\right) \left( x\right) =\sum_{s=0}^{2q}\frac{f^{\left(
s\right) }\left( x\right) }{s!}\left( L_{n}\psi _{x}^{s}\right) \left(
x\right) +o\left( n^{-q}\right) \text{ }\qquad \left( n\rightarrow \infty
\right) .
\end{equation*}
\end{lemma}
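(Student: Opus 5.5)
The plan is the standard Taylor-expansion argument with a Peano remainder, localized by positivity; the condition on the moments of order $2q$ and $2q+2$ gives the required rates. Fix $f\in H^{\left( 2q\right) }\left( x\right) $ and write
\begin{equation*}
f\left( t\right) =\sum_{s=0}^{2q}\frac{f^{\left( s\right) }\left( x\right) }{s!}\left( t-x\right) ^{s}+R\left( t\right) ,\qquad R\left( t\right) =h\left( t\right) \left( t-x\right) ^{2q},
\end{equation*}
where $h\left( x\right) :=0$ and $h\left( t\right) :=R\left( t\right) /\left( t-x\right) ^{2q}$ for $t\neq x$. Since $f$ is $2q$ times differentiable at $x$, Taylor's formula with Peano remainder gives $h\left( t\right) \rightarrow 0$ as $t\rightarrow x$. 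By linearity, $L_{n}$ applies to $R$, because $R$ is $f$ minus a polynomial of degree at most $2q$. Applying $L_{n}$ and evaluating at $x$ gives
\begin{equation*}
\left( L_{n}f\right) \left( x\right) =\sum_{s=0}^{2q}\frac{f^{\left( s\right) }\left( x\right) }{s!}\left( L_{n}\psi _{x}^{s}\right) \left( x\right) +\left( L_{n}R\right) \left( x\right) .
\end{equation*}
It therefore remains to show that $\left( L_{n}R\right) \left( x\right) =o\left( n^{-q}\right) $.

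The key step is a pointwise majorant for $\left\vert R\right\vert $. Given $\varepsilon >0$, choose $\delta >0$ with $\left\vert h\left( t\right) \right\vert \leq \varepsilon $ for $\left\vert t-x\right\vert <\delta $. Next I claim that there is a constant $M=M\left( \delta \right) $ with $\left\vert h\left( t\right) \right\vert \leq M$ for all $t\in I$ with $\left\vert t-x\right\vert \geq \delta $.
\begin{itemize}
\item On a bounded set $\delta \leq \left\vert t-x\right\vert \leq T$ this follows from the local boundedness of $f$ and of the Taylor polynomial, together with $\left( t-x\right) ^{2q}\geq \delta ^{2q}$.
\item For $\left\vert t\right\vert $ large (only needed if $I$ is unbounded) it follows from the growth condition $f\left( t\right) =O\left( \left\vert t\right\vert ^{2q}\right) $. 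The Taylor polynomial also has degree $2q$, and $\left\vert t\right\vert ^{2q}\leq C\left( t-x\right) ^{2q}$ once $\left\vert t-x\right\vert $ is large.
\end{itemize}
This uniform bound is where the hypotheses on $f$ are used, and it is the only point needing care. One must choose $T$ so that the growth estimate applies beyond $T$, and then handle the compact annulus separately via local boundedness.

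Combining the two cases gives, for all $t\in I$,
\begin{equation*}
\left\vert R\left( t\right) \right\vert \leq \varepsilon \left( t-x\right) ^{2q}+\frac{M}{\delta ^{2}}\left( t-x\right) ^{2q+2}.
\end{equation*}
In the second case we used $1\leq \left( t-x\right) ^{2}/\delta ^{2}$. By positivity and linearity of $L_{n}$, and the hypothesis on the moments with $s=2q$ and $s=2q+2$,
\begin{equation*}
\left\vert \left( L_{n}R\right) \left( x\right) \right\vert \leq \varepsilon \left( L_{n}\psi _{x}^{2q}\right) \left( x\right) +\frac{M}{\delta ^{2}}\left( L_{n}\psi _{x}^{2q+2}\right) \left( x\right) \leq \varepsilon C_{1}n^{-q}+\frac{MC_{2}}{\delta ^{2}}n^{-q-1}
\end{equation*}
for all large $n$, where $C_{1},C_{2}$ do not depend on $\varepsilon $. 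Hence $\limsup_{n\rightarrow \infty }n^{q}\left\vert \left( L_{n}R\right) \left( x\right) \right\vert \leq \varepsilon C_{1}$. Since $\varepsilon $ is arbitrary, this gives $o\left( n^{-q}\right) $.

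The hypotheses for the remaining $s$ are not needed for this estimate. They only ensure that each term $\frac{f^{\left( s\right) }\left( x\right) }{s!}\left( L_{n}\psi _{x}^{s}\right) \left( x\right) $ in the sum has the expected order $O\left( n^{-\left\lfloor \left( s+1\right) /2\right\rfloor }\right) $.
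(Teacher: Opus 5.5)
Your proof is correct and is exactly the Taylor remainder (Peano form) plus positivity localization that underlies Sikkema's Theorem~3. The paper does not prove the lemma itself; it only cites Sikkema and refers to ``an inspection of the proof''. Your argument supplies that proof, and you rightly note that only the moments of orders $2q$ and $2q+2$ are needed, since $\lfloor (2q+1)/2\rfloor = q$ and $\lfloor (2q+3)/2\rfloor = q+1$. One implicit point: your bound on $\delta\le |t-x|\le T$ reads ``locally bounded'' as ``bounded on every bounded subset of $I$'', which is automatic when $I$ is closed and is the reading under which the lemma actually holds.
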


Now we are in position to prove our main result.

\begin{proof}[Proof of Theorem~\protect\ref{theorem-expansion}]
Application of Sikkema's theorem (Lemma~\ref{Lemma-Sikkema}) yields 
\begin{equation*}
\left( \bar{G}_{n,\alpha }f\right) \left( x\right) =\sum_{s=0}^{2q}\frac{%
f^{\left( s\right) }\left( x\right) }{s!}\left( \bar{G}_{n,\alpha }\psi
_{x}^{s}\right) \left( x\right) +o\left( n^{-q}\right) \text{ }\qquad \left(
n\rightarrow \infty \right) .
\end{equation*}%
By Lemma~\ref{lemma-moments}, we infer that 
\begin{eqnarray*}
&&\sum_{s=0}^{2q}\frac{f^{\left( s\right) }\left( x\right) }{s!}\left( \bar{G%
}_{n,\alpha }\psi _{x}^{s}\right) \left( x\right)  \\
&=&\sum_{k=0}^{q}\frac{1}{k!n^{k}}\sum_{s=k}^{2k}f^{\left( s\right) }\left(
x\right) \binom{k}{s-k}\sum_{i=0}^{2k-s}\binom{2k-s}{i}\frac{\left(
k+i\right) !}{\left( s-k+2i\right) !}x^{s-k+2i}\left( 2\alpha \right)
^{i}+o\left( n^{-q}\right)  \\
&=&\sum_{k=0}^{q}\frac{1}{k!n^{k}}\sum_{s=0}^{k}f^{\left( k+s\right) }\left(
x\right) \binom{k}{s}\sum_{i=0}^{k-s}\binom{k-s}{i}\frac{\left( k+i\right) !%
}{\left( s+2i\right) !}x^{s+2i}\left( 2\alpha \right) ^{i}+o\left(
n^{-q}\right) 
\end{eqnarray*}%
as $n\rightarrow \infty $. Using 
\begin{equation*}
\binom{k}{s}\binom{k-s}{i}=\binom{k}{i}\binom{k-i}{s}
\end{equation*}%
we obtain 
\begin{eqnarray*}
&&\left( \bar{G}_{n,\alpha }f\right) \left( x\right)  \\
&=&\sum_{k=0}^{q}\frac{1}{k!n^{k}}\sum_{i=0}^{k}\binom{k}{i}\left( 2\alpha
\right) ^{i}\sum_{s=0}^{k-i}\binom{k-i}{s}f^{\left( k+s\right) }\left(
x\right) \frac{\left( k+i\right) !}{\left( s+2i\right) !}x^{s+2i}+o\left(
n^{-q}\right)  \\
&=&\sum_{k=0}^{q}\frac{1}{k!n^{k}}\sum_{i=0}^{k}\binom{k}{i}\left( 2\alpha
\right) ^{k-i}\sum_{s=0}^{i}\binom{i}{s}f^{\left( k+s\right) }\left(
x\right) \frac{\left( 2k-i\right) !}{\left( s+2k-2i\right) !}%
x^{s+2k-2i}+o\left( n^{-q}\right) 
\end{eqnarray*}%
as $n\rightarrow \infty $. Observing that $\left( x^{2k-i}\right) ^{\left(
i-s\right) }=\frac{\left( 2k-i\right) !}{\left( s+2k-2i\right) !}x^{s+2k-2i}$
it follows, by Leibniz rule for differentiation, that 
\begin{equation*}
\left( \bar{G}_{n,\alpha }f\right) \left( x\right) =\sum_{k=0}^{q}\frac{1}{%
k!n^{k}}\sum_{i=0}^{k}\binom{k}{i}\left( x^{2k-i}f^{\left( k\right) }\left(
x\right) \right) ^{\left( i\right) }\left( 2\alpha \right) ^{k-i}+o\left(
n^{-q}\right) 
\end{equation*}%
as $n\rightarrow \infty $. 
\end{proof}

\bigskip


\end{document}